\documentclass[preprint,12pt]{elsarticle}

\biboptions{numbers,sort&compress}
\usepackage{amssymb}
\usepackage{amsthm}
\usepackage{amsmath}
\usepackage{epic}
\usepackage{CJK}
\usepackage{setspace}
\usepackage{bm}
\newtheorem{thm}{Theorem}[section]
\newtheorem{cor}[thm]{Corollary}
\newtheorem{lem}[thm]{Lemma}

\newtheorem{example}[thm]{Example}
\newtheorem{rem}{Remark}

\journal{}

\begin{document}
\begin{spacing}{1.15}
\begin{CJK*}{GBK}{song}
\begin{frontmatter}
\title{\textbf{Spectral bounds for the independence number of graphs and even uniform hypergraphs}}

\author{Xinyu Hu}
\author{Jiang Zhou}\ead{zhoujiang@hrbeu.edu.cn}
\author{Changjiang Bu}

\address{College of Mathematical Sciences, Harbin Engineering University, Harbin 150001, PR China}

\begin{abstract}
In this paper, we give spectral upper bounds for the independence number of even uniform hypergraphs and graphs, extend the Hoffman bound to even uniform hypergraphs, and give a simple spectral condition for determining the independence number, the Shannon capacity and the Lov\'{a}sz number of a graph. The Hoffman bound on the Lov\'{a}sz number is also extended from regular graphs to general graphs.
\end{abstract}

\begin{keyword}
Eigenvalues, Independence number, Adjacency tensor, Lov\'{a}sz number, Shannon capacity
\\
\emph{AMS classification:} 05C50; 05C65; 05C69; 15A18; 15A69
\end{keyword}
\end{frontmatter}

\section{Introduction}
For a graph $G$ with vertex set $V(G)$ and edge set $E(G)$, the \textit{adjacency matrix} $A_G$ of $G$ is a $|V(G)|\times|V(G)|$ symmetric matrix with entries
\begin{eqnarray*}
(A_G)_{ij}=\begin{cases}1~~~~~~~~~~\mbox{if}~\{i,j\}\in E(G),\\
0~~~~~~~~~~\mbox{if}~\{i,j\}\notin E(G).\end{cases}
\end{eqnarray*}
Eigenvalues of the adjacency matrix of $G$ are called \textit{eigenvalues} of $G$.

The \textit{independence number} $\alpha(G)$ of graph $G$ is the maximum size of independent sets in $G$. The following is the celebrated Hoffman bound on $\alpha(G)$.
\begin{thm}\label{Hoffman}
\textup{\cite{Brouwer}} Let $G$ be an $n$-vertex $d$-regular ($d>0$) graph with minimum eigenvalue $\lambda$. Then
\begin{eqnarray*}
\alpha(G) \leqslant\frac{-\lambda n}{{d - {\lambda }}}.
\end{eqnarray*}
If an independent set $S$ meets this bound, then every vertex not in $S$ is adjacent to exactly $-\lambda$ vertices of $S$.
\end{thm}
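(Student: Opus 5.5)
The plan is the standard Rayleigh-quotient argument with a test vector that is constant on $S$ and constant on its complement. Let $S$ be an independent set with $|S|=s$, and let $A=A_G$. Because $G$ is $d$-regular with $d>0$, the all-ones vector $\mathbf{1}$ is an eigenvector with eigenvalue $d$. Also $\lambda<0$, since the trace of $A$ is $0$ and $A\neq 0$. If $s=n$ then $G$ has no edges, which contradicts $d>0$, so we may assume $s<n$.

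Put $x=\mathbf{1}_S-\frac{s}{n}\mathbf{1}$, where $\mathbf{1}_S$ is the characteristic vector of $S$. Then $x\perp\mathbf{1}$ and $x\neq 0$. Since $\lambda$ is the minimum eigenvalue, the Rayleigh quotient gives $x^{T}Ax\geqslant \lambda x^{T}x$.

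We compute both sides. Independence of $S$ gives $\mathbf{1}_S^{T}A\mathbf{1}_S=0$. Regularity gives $\mathbf{1}^{T}A\mathbf{1}_S=ds$ and $\mathbf{1}^{T}A\mathbf{1}=dn$. Hence
\[
x^{T}Ax=0-2\frac{s}{n}ds+\frac{s^2}{n^2}dn=-\frac{ds^2}{n},
\qquad
x^{T}x=s-\frac{s^2}{n}=\frac{s(n-s)}{n}.
\]
Substituting into $x^{T}Ax\geqslant \lambda x^{T}x$ gives $-ds\geqslant \lambda(n-s)$, that is, $s(d-\lambda)\leqslant -\lambda n$. This is exactly the bound.

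For the equality statement, suppose $s=\frac{-\lambda n}{d-\lambda}$. Then $x$ attains the minimum of the Rayleigh quotient, so $x$ is an eigenvector for $\lambda$, i.e. $Ax=\lambda x$. (To justify this, expand $x$ in an orthonormal eigenbasis: equality forces every component outside the $\lambda$-eigenspace to vanish.) For a vertex $v\notin S$, let $e_v$ be the number of neighbours of $v$ in $S$. Then
\[
(Ax)_v=e_v-\frac{s}{n}d=\lambda x_v=-\lambda\frac{s}{n},
\]
so $e_v=\frac{s}{n}(d-\lambda)$. By the equality assumption this equals $-\lambda$.

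The main point needing care is this eigenvector-at-equality step. Everything else is a short computation using only the regularity of $G$ and the independence of $S$.
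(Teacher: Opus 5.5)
Your proof is correct and follows essentially the same route as the paper. Remark~\ref{d} obtains this bound by specializing Theorem~\ref{t} to $k=2$, $t=1$: it evaluates the quadratic form of the positive semidefinite matrix $A-\lambda I$ at a vector equal to $c$ on $S$ and $-1$ off $S$, and gets the equality case from that vector being a $\lambda$-eigenvector. Your $x=\mathbf{1}_S-\frac{s}{n}\mathbf{1}$ is a multiple of this vector with $c=(n-s)/s$, whereas the paper fixes $c=d/(-\lambda)$ independently of $|S|$, which is what lets its version extend to nonregular graphs and hypergraphs; the two choices coincide exactly when the bound is attained.
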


Haemers generalized the Hoffman bound to the general case.
\begin{thm}\label{thm1.2}
\textup{\cite[Theorem 3.3]{Haemers}} Let $G$ be an $n$-vertex graph with minimum degree $\delta>0$, and let $\lambda_1$ and $\lambda_n$ be the maximum and the minimum eigenvalue of $G$, respectively. Then
\begin{eqnarray*}
\alpha(G)\leq n\frac{-\lambda_1\lambda_n}{\delta^2-\lambda_1\lambda_n}.
\end{eqnarray*}
\end{thm}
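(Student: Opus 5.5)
Let $S$ be a maximum independent set with $|S|=s=\alpha(G)$, and put $A=A_G$. My plan is to test $A$ against a vector that is constant on $S$ and constant on $V\setminus S$. For a non-regular graph the all-ones vector is not an eigenvector, so I will work with the Rayleigh quotient and with eigenvalue interlacing for the quotient matrix of the partition $\{S,V\setminus S\}$.

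Let $e$ be the number of edges between $S$ and $V\setminus S$; since $S$ is independent, these are all the edges at $S$. The quotient matrix is
\[
B=\begin{pmatrix}0 & e/s\\ e/(n-s) & b\end{pmatrix},
\]
where $b$ is the average number of neighbours inside $V\setminus S$ of a vertex of $V\setminus S$. By interlacing (Haemers), the eigenvalues $\mu_1\ge\mu_2$ of $B$ satisfy $\lambda_n\le\mu_2\le\mu_1\le\lambda_1$. Since $\det B=\mu_1\mu_2$, this gives
\[
-\frac{e^2}{s(n-s)}=\mu_1\mu_2\ge \lambda_1\lambda_n .
\]
The inequality holds because $\mu_2\le 0\le\mu_1$ (their product is $\le 0$), so $|\mu_1\mu_2|\le\lambda_1|\lambda_n|$. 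Hence $e^2\le -\lambda_1\lambda_n\, s(n-s)$.

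Next I bound $e$ from below using the degrees. Every vertex of $S$ has all its neighbours outside $S$, so $e=\sum_{v\in S}d(v)\ge \delta s$. Substituting,
\[
\delta^2 s^2\le -\lambda_1\lambda_n\, s(n-s),
\]
that is, $\delta^2 s\le -\lambda_1\lambda_n(n-s)$. Rearranging gives $s(\delta^2-\lambda_1\lambda_n)\le -\lambda_1\lambda_n n$, which is exactly the claimed bound on $\alpha(G)$. The denominator is positive because $\lambda_n<0$ whenever $G$ has an edge, and $\delta>0$ guarantees edges.

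The step I expect to be the main obstacle is justifying interlacing for the non-symmetric quotient matrix $B$. I would handle it in the standard way. Let $P$ be the $n\times 2$ matrix whose columns are the normalized characteristic vectors of $S$ and $V\setminus S$. Then $P^{T}AP$ is symmetric, is similar to $B$ via a diagonal scaling, and its eigenvalues interlace those of $A$ by the Cauchy interlacing theorem for compressions. Alternatively, I could bypass interlacing by applying the Rayleigh quotient directly: $\lambda_1\ge$ the maximum and $\lambda_n\le$ the minimum of $x^{T}Ax/x^{T}x$ over $x$ in the span of the two characteristic vectors. This is the same $2\times2$ computation. The degenerate cases $s=n$ (impossible since $\delta>0$) and $s=0$ (trivial) need no separate treatment.
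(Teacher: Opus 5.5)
The paper gives no proof of this statement: Theorem~\ref{thm1.2} is quoted from Haemers, so there is no in-paper argument to compare against. Your proof is correct and complete, and it is essentially the standard quotient-matrix interlacing argument Haemers uses.

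The structure is as follows. You compress $A$ to the span of the characteristic vectors of $S$ and $V\setminus S$. The determinant $-e^2/(s(n-s))$ of the compression does not involve the edges inside $V\setminus S$. The inequalities $\lambda_n\le\mu_2\le 0\le\mu_1\le\lambda_1$ give $e^2\le-\lambda_1\lambda_n s(n-s)$; as you note, they follow from the Rayleigh quotient alone. Then $e\ge\delta s$ finishes it. The side points also hold: $B$ has real eigenvalues because it is diagonally similar to $P^\top AP$, and $1\le s<n$ because $\delta>0$.

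It is worth contrasting this with the technique the paper uses for its own companion bound, Corollary~\ref{2} (Theorem~\ref{t} with $k=2$). There one plugs the single vector $x=c\mathbf{1}_S-\mathbf{1}_{V\setminus S}$ into $x^\top(A-\lambda_n I)x\ge 0$ and optimizes over $c$. Only the lower end of the Rayleigh quotient is used, so $\lambda_1$ never enters. The unknown number of edges inside $V\setminus S$ is instead written as $|E(G)|$ minus the edges at $S$, which is where $\overline{d}$ comes from.

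Your argument uses both ends of the Rayleigh quotient on the same two-dimensional subspace. The determinant then eliminates the internal edges outright, at the cost of bringing in $\lambda_1$. Neither bound dominates the other. The join example at the end of the paper makes the paper's bound smaller. On the other hand, Haemers' bound is always below $n$, while the paper's can exceed $n$ (e.g.\ whenever $\delta=1$ and $\overline{d}>3$, since then $-\lambda_n\ge 1$).

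The one-vector approach is also what lets the paper pass to even uniform hypergraphs. It needs only positive semidefiniteness of $\mathcal{A}_{\mathcal{H}}-\lambda\mathcal{I}$ at a single vector, whereas your determinant and interlacing step has no direct tensor analogue.
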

The \textit{Laplacian matrix} of a graph $G$ is defined as $L_G=D_G-A_G$, where $D_G$ is the diagonal matrix of vertex degrees of $G$. Eigenvalues of $L_G$ are called the \textit{Laplacian eigenvalues} of $G$. The following is another generalization of Theorem \ref{Hoffman}.
\begin{thm}\label{thm1.3}
\textup{\cite[Corollary 3.6]{Godsil}} Let $G$ be an $n$-vertex graph with the largest Laplacian eigenvalue $\mu>0$ and the minimum degree $\delta$. Then
\begin{eqnarray*}
\alpha(G)\leq\frac{n(\mu-\delta)}{\mu}.
\end{eqnarray*}
\end{thm}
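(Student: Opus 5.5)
We prove Theorem \ref{thm1.3} with a Rayleigh quotient argument for $L_G$, using a test vector built from a maximum independent set. Let $S$ be an independent set with $|S|=s=\alpha(G)$, and write $\bar S=V(G)\setminus S$. We may assume $s<n$. Otherwise $G$ has no edges, so $L_G=0$ and $\mu=0$, which the hypothesis excludes.

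Define $x\in\mathbb{R}^n$ by $x_i=n-s$ for $i\in S$ and $x_i=-s$ for $i\in\bar S$. Then $x$ is orthogonal to the all-ones vector. Since $L_G$ is positive semidefinite with largest eigenvalue $\mu$, the Rayleigh principle gives
\begin{eqnarray*}
x^{T}L_Gx=\sum_{\{i,j\}\in E(G)}(x_i-x_j)^2\leq \mu\, x^{T}x .
\end{eqnarray*}

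We compute both sides.
\begin{itemize}
\item \emph{Right side.} We have $x^Tx=s(n-s)^2+(n-s)s^2=s(n-s)n$.
\item \emph{Left side.} An edge inside $\bar S$ contributes $0$. No edge lies inside $S$, since $S$ is independent. Each edge between $S$ and $\bar S$ contributes $(n-s+s)^2=n^2$.
\item \emph{Counting the cross edges.} Every edge at a vertex of $S$ goes to $\bar S$, so the number of such edges is $e(S,\bar S)=\sum_{i\in S}d_i\geq s\delta$.
\end{itemize}
Hence $n^2 s\delta\leq n^2 e(S,\bar S)\leq \mu\, s(n-s)n$. Dividing by $sn$ (note $s\geq1$) gives $n\delta\leq\mu(n-s)$. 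Rearranging,
\begin{eqnarray*}
s\leq \frac{n(\mu-\delta)}{\mu},
\end{eqnarray*}
which is the stated bound.

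The only delicate point is the choice of test vector. The quadratic form $x^TL_Gx$ must count only the cross edges $e(S,\bar S)$, which are controlled from below by $\delta$ because $S$ is independent. The vector must also be orthogonal to the all-ones vector so that the Rayleigh quotient bound by $\mu$ applies cleanly. The vector with two constant values on $S$ and $\bar S$, balanced to sum to zero, satisfies both requirements.
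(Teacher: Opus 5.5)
Your proof is correct. The paper does not prove this statement itself; it only quotes it from Godsil and Newman as background, so there is no in-paper argument to match. What you give is the standard Rayleigh-quotient proof. It is the Laplacian analogue of the method the paper uses for Theorem~\ref{t}: evaluate a positive semidefinite form on a vector that is constant on $S$ and constant on its complement, then use $\sum_{i\in S}d_i\geq|S|\delta$. There the form is $\mathcal{A}-\lambda\mathcal{I}$; here it is $\mu I-L_G$.

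One correction to your closing commentary. Orthogonality to $\mathbf{1}$ is not needed for $x^\top L_Gx\leq\mu\,x^\top x$. That inequality holds for every real $x$, because $\mu$ is the largest eigenvalue of the symmetric matrix $L_G$. What the balancing does is make your vector optimal among two-valued vectors.

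In the paper's style you could instead take $x_i=c$ on $S$ and $x_i=-1$ off $S$. This gives $|S|\bigl((c+1)^2\delta+\mu(1-c^2)\bigr)\leq\mu n$. The choice $c=\delta/(\mu-\delta)$ does not depend on $|S|$ and yields the same bound $n(\mu-\delta)/\mu$. It is legitimate because $\mu$ is at least the maximum degree plus one when $G$ has an edge, so $\mu>\delta$. Your choice $c=(n-s)/s$ reaches the same bound, with $c$ depending on $s$.
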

There are various Hoffman-type ratio bounds, which are useful in graph theory, coding theory and extremal combinatorics \cite{Delsarte,EllisFilmus,Ellis,Godsil,Haemers}.

For a graph $G=(V(G),E(G))$, let $G^k$ denote the graph whose vertex set is $V(G)^k$, in which two vertices $u_1\cdots u_k$ and $v_1\cdots v_k$ are adjacent if and only if for each $i\in\{1,\ldots,k\}$ either $u_i=v_i$ or $\{u_i,v_i\}\in E(G)$. The \textit{Shannon capacity} \cite{Alon,shannon1} of $G$ is defined as $\Theta(G)=\sup_{k}\alpha(G^k)^{1/k}$, which is the effective size of an alphabet in an information channel represented by the graph $G$. In \cite{L}, the Lov\'{a}sz number $\vartheta(G)$ was used to study the Shannon capacity of graphs. Lov\'{a}sz proved that $\alpha(G)\leq\Theta(G)\leq\vartheta(G)$, and posed the problem of finding graphs with $\Theta(G)=\vartheta(G)$ \cite[Problem 1]{L}. For a regular graph $G$, the Hoffman bound is also an upper bound for $\vartheta(G)$ (see \cite[Theorem 9]{L}).

Let $\mathcal{H}$ be a $k$-uniform hypergraph, and suppose that $0<t<k$. For a vertex subset $S\subseteq V(\mathcal{H})$, if $|e\cap S|\in\{0,t\}$ for each edge $e\in E(\mathcal{H})$, then we say that $S$ is a \textit{$t$-independent set} of $\mathcal{H}$. The \textit{t-independence number} $\alpha_t(\mathcal{H})$ of $\mathcal{H}$ is the maximum size of $t$-independent sets in $\mathcal{H}$. Then $\alpha_1(\mathcal{H})$ equals to the strongly independence number \cite{ZhouLi} of $\mathcal{H}$. When $\mathcal{H}$ is a graph ($k=2$), $\alpha_1(\mathcal{H})$ equals to the independence number $\alpha(\mathcal{H})$. So the $t$-independence number of uniform hypergraphs is a generalization of the independence number of graphs.

For the spectral ratio-type bound on hypergraphs, as far as we currently know, the upper bound in Theorem \ref{thm1.3} have been extended to the independence number of a simplicial complex by using the eigenvalues of the upper Laplacian operators (see \cite{Bachoc,Golubev}). The contribution of this paper is to extend the spectral ratio-type bound to hypergraphs within a different framework, that is, establish Hoffman-type bounds by using adjacency tensor eigenvalues of hypergraphs. There is a natural correspondence between hypergraphs and tensors. Recent years, the research on spectral theory of hypergraphs via tensors has attracted extensive attention \cite{Abiad,Chen,Clark,Cooper,Ellingham,Fan,Gao,LiMohar,LiSu}. It is natural and interesting to study the relationship between the independence number and tensor eigenvalues of hypergraphs. We will give Hoffman-type bounds on even uniform hypergraphs in terms of tensor eigenvalues.

The paper is organized as follows. In Section 2, we introduce some notations and auxiliary lemmas. In Section 3, we give spectral upper bounds for the $t$-independence number of even uniform hypergraphs, which extends the Hoffman bound to even uniform hypergraphs. In Section 4, we obtain spectral upper bounds for the independence number and the Lov\'{a}sz number of graphs based on the results in Section 3, and give a spectral condition for determining the independence number, the Shannon capacity and the Lov\'{a}sz number of a graph. The Hoffman bound on the Lov\'{a}sz number is also extended from regular graphs to general graphs.

\section{Preliminaries}

\subsection{Lov\'{a}sz number}
We now introduce the Lov\'{a}sz number defined in \cite{L}. An \textit{orthonormal representation} of an $n$-vertex graph $G$ is a set $\{u_1,\ldots,u_n\}$ of unit real vectors such that $u_i^\top u_j=0$ if $i$ and $j$ are two nonadjacent vertices in $G$. The \textit{value} of an orthonormal representation $\{u_1,\ldots,u_n\}$ is defined to be
\begin{eqnarray*}
\min_c\max_{1\leq i\leq n}\frac{1}{(c^\top u_i)^2},
\end{eqnarray*}
where $c$ ranges over all unit real vectors. The \textit{Lov\'{a}sz number} $\vartheta(G)$ is the minimum value of all orthonormal representations of $G$.

For any graph $G$, the independence number $\alpha(G)$, the Shannon capacity $\Theta(G)$ and the Lov\'{a}sz number $\vartheta(G)$ have the following relations.
\begin{lem}\label{Shannon}
\textup{\cite{L}} For any graph $G$, we have
\begin{eqnarray*}
\alpha (G) \leqslant \Theta \left( G \right) \leqslant \vartheta (G).
\end{eqnarray*}
\end{lem}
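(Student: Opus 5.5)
The plan is to prove the two inequalities separately, using only the definitions of $\Theta(G)$ and of $\vartheta(G)$ via orthonormal representations given above.

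\emph{The inequality $\alpha(G)\le\Theta(G)$.} Take the case $k=1$ in the definition $\Theta(G)=\sup_k\alpha(G^k)^{1/k}$. Since $G^1=G$, we get $\Theta(G)\ge\alpha(G)$ immediately.

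\emph{Supermultiplicativity of $\vartheta$ on independent sets.} First show that $\alpha(G)\le\vartheta(G)$ for every graph $G$. Let $\{u_1,\dots,u_n\}$ be an orthonormal representation of $G$ and $c$ a unit vector. If $S$ is independent, the vectors $u_i$ with $i\in S$ are pairwise orthogonal unit vectors, so Bessel's inequality gives
\[
\sum_{i\in S}(c^\top u_i)^2\le 1 .
\]
Hence $\min_{i\in S}(c^\top u_i)^2\le 1/|S|$, so $\max_i 1/(c^\top u_i)^2\ge |S|$. Minimizing over $c$ and over all representations yields $\vartheta(G)\ge\alpha(G)$.

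\emph{Multiplicativity bound $\vartheta(G^k)\le\vartheta(G)^k$.} Take an optimal representation $\{u_i\}$ with handle $c$ attaining $\vartheta(G)$; a minimizer exists by compactness, since we may restrict to dimension at most $n$. For a vertex $v_1\cdots v_k$ of $G^k$, set $w=u_{v_1}\otimes\cdots\otimes u_{v_k}$ and use the handle $c^{\otimes k}$. The vector $w$ is a unit vector, and
\[
w^\top w' = \prod_j u_{v_j}^\top u_{v'_j}.
\]
If two distinct vertices are nonadjacent in $G^k$, then some coordinate $j$ has $v_j\ne v'_j$ with $\{v_j,v'_j\}\notin E(G)$. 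That factor is $0$, so the tensor vectors form an orthonormal representation of $G^k$. Moreover,
\[
\bigl((c^{\otimes k})^\top w\bigr)^2=\prod_j (c^\top u_{v_j})^2\ge \vartheta(G)^{-k},
\]
which gives $\vartheta(G^k)\le\vartheta(G)^k$.

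\emph{Conclusion and main difficulty.} Combining the two previous steps gives $\alpha(G^k)\le\vartheta(G^k)\le\vartheta(G)^k$. Taking $k$-th roots and the supremum over $k$ yields $\Theta(G)\le\vartheta(G)$. The only delicate points are the existence of a minimizing representation, which can be avoided by an $\varepsilon$-approximation argument, and checking the orthogonality of tensor products for nonadjacent pairs. I expect the tensor-product step to be the main substance of the proof.
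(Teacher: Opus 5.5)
Your proof is correct. The paper gives no proof of this lemma and simply quotes it from Lov\'{a}sz \cite{L}, and your argument is essentially Lov\'{a}sz's original proof: Bessel's inequality on the pairwise orthogonal vectors indexed by an independent set gives $\alpha(G)\le\vartheta(G)$, and tensor-product representations with handle $c^{\otimes k}$ give $\vartheta(G^k)\le\vartheta(G)^k$.
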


For a real square matrix $M$, the \textit{group inverse} of $M$, denoted by $M^\#$, is the matrix $X$ such that $MXM=M,~XMX=X$ and $MX=XM$. It is known \cite{Ben-Israel} that $M^\#$ exists if and only if $\mbox{\rm rank}(M)=\mbox{\rm rank}(M^2)$. If $M^\#$ exists, then $M^\#$ is unique.
\begin{lem} \label{zhou1}
\textup{\cite{zhou}} For any graph $G$, we have
\begin{eqnarray*}
\alpha(G)\leq\min_{(M,x)\in\mathcal{M}(G)}x^\top M^\#x\max_{u\in V(G)}\frac{(M)_{uu}}{x_u^2}=\vartheta(G),
\end{eqnarray*}
where $\mathcal{M}(G)$ denotes the set of real matrix-vector pairs $(M,x)$ satisfying:

\noindent (a) $M$ is a positive semidefinite matrix indexed by $V(G)$ such that $(M)_{ij}=0$ if $i,j$ are two nonadjacent distinct vertices.

\noindent \noindent (b) $x\in R(M)=\{z:z=My,y\in\mathbb{R}^n\}$ {\rm (}$n=|V(G)|${\rm )} and $x_u\neq0$ for each $u\in V(G)$.
\end{lem}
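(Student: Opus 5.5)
The plan is to show separately that every pair in $\mathcal{M}(G)$ gives a value at least $\vartheta(G)$, and that some pair attains $\vartheta(G)$. The inequality $\alpha(G)\leq\vartheta(G)$ then follows from Lemma \ref{Shannon}. Throughout I would use the group-inverse identity $MM^\#M=M$: if $x=My$, then
\[
x^\top M^\# x=y^\top MM^\#My=y^\top My .
\]
(For symmetric positive semidefinite $M$ the group inverse is the Moore--Penrose inverse, so it is symmetric, but the identity above does not even need this.)

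\emph{Lower bound.} Take $(M,x)\in\mathcal{M}(G)$ and write $x=My$. First I would show $M_{uu}>0$ for every $u$. If $M_{uu}=0$, positive semidefiniteness forces row $u$ of $M$ to vanish, so $x_u=(My)_u=0$, contradicting (b). Next factor $M=U^\top U$ with columns $u_1,\ldots,u_n$, so that $M_{ij}=u_i^\top u_j$. By (a), $u_i\perp u_j$ whenever $i,j$ are distinct and nonadjacent, so $v_i=u_i/\|u_i\|$ is an orthonormal representation of $G$. Put $c'=Uy$. Then $x_u=u_u^\top c'$ and $x^\top M^\#x=y^\top U^\top Uy=\|c'\|^2$; here $c'\neq0$ because $x\neq0$. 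With $c=c'/\|c'\|$ we get
\[
\frac{1}{(c^\top v_u)^2}=\frac{\|c'\|^2\,M_{uu}}{x_u^2}=x^\top M^\#x\,\frac{M_{uu}}{x_u^2}.
\]
Taking the maximum over $u$ shows the objective equals $\max_u 1/(c^\top v_u)^2$, which is at least the value of the representation and hence at least $\vartheta(G)$.

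\emph{Attainment.} Let $\{v_1,\ldots,v_n\}$ and the unit vector $c$ realize $\vartheta(G)$; finiteness of the value gives $c^\top v_u\neq0$ for all $u$. Replace $c$ by its orthogonal projection $\tilde c$ onto $\mathrm{span}\{v_i\}$. This leaves each $c^\top v_u$ unchanged and gives $\|\tilde c\|\leq1$, so $\tilde c/\|\tilde c\|$ does no worse; hence we may assume $c=Uy$ for some $y$, where $U=[v_1\cdots v_n]$. Set $M=U^\top U$ and $x=My$. Then $M$ satisfies (a), and $x_u=v_u^\top c\neq0$, so (b) holds. The computation above, run backwards with $M_{uu}=1$, gives objective $\max_u 1/(c^\top v_u)^2=\vartheta(G)$.

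The only delicate points are the identity $x^\top M^\#x=y^\top My$ and the projection step that places $c$ in the range. I expect the projection step to be the main thing to verify carefully: one has to check that projecting and renormalizing cannot increase $\max_u 1/(c^\top v_u)^2$, which holds because the inner products are unchanged and $\|\tilde c\|\leq1$.
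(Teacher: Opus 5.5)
Your proof is correct. The paper does not prove this lemma; it quotes it from an earlier reference and uses it as a black box. So there is no in-paper argument to compare with, and what you give is a complete, self-contained derivation from Lov\'{a}sz's definition.

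The dictionary you use is the right one: the Gram factorization $M=U^\top U$ together with the handle $c'=Uy$ for $x=My$, so that $x^\top M^\#x=y^\top My=\|c'\|^2$ and $x_u=u_u^\top c'$. You also handle the two places that need care:
\begin{itemize}
\item Positive semidefiniteness together with $x_u\neq 0$ forces $M_{uu}>0$, which makes the normalization $v_u=u_u/\|u_u\|$ legitimate.
\item Projecting an optimal handle onto $\mathrm{span}\{v_i\}$ keeps every $c^\top v_u$ and does not increase $\|c\|$. After renormalizing, the handle is still optimal and lies in the column space of $U$.
\end{itemize}

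Two small remarks. First, the attainment half uses that the minimum defining $\vartheta(G)$ is attained, and that $\vartheta(G)\le n<\infty$ (take $u_i=e_i$, $c=\mathbf{1}/\sqrt{n}$). The paper's definition takes the former for granted, and it follows from a routine compactness argument; without it, your argument would still show that the two infima coincide.

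Second, the paper's applications in Theorems \ref{theta2} and \ref{theta1} only use your first half. They plug in the concrete pair $(A-\lambda I,\,My)$, with $y$ the indicator of $S$ or $y=\mathbf{1}$, and invoke $x^\top M^\#x=y^\top My$ to get an \emph{upper} bound on $\vartheta(G)$. That is exactly the inequality ``every pair in $\mathcal{M}(G)$ has objective at least $\vartheta(G)$'' that you establish.
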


\subsection{Spectrum of hypergraphs}
An order $k$ dimension $n$ complex tensor $\mathcal{A}=(a_{i_1i_2\cdots i_k})$ is a multidimensional array with $n^k$ entries, where $i_{j}=1,2,...,n$, $j=1,2,...,k$. For a vector $x=(x_1,\ldots,x_n)^\top\in\mathbb{C}^{n}$, let $\mathcal{A}x^k=\sum_{i_1,\ldots,i_k=1}^na_{i_1i_2\cdots i_k}x_{i_1}\cdots x_{i_k}$, and let $\mathcal{A}x^{k-1}$ denote the vector in $\mathbb{C}^{n}$ whose $i$-th component is
\[{\left( {\mathcal{A}{x^{k - 1}}} \right)_i} = \sum\limits_{{i_2},{i_3}, \cdots ,{i_k} = 1}^n {{a_{i{i_2}{i_3} \cdots {i_k}}}} {x_{{i_2}}}{x_{{i_3}}} \cdots {x_{{i_k}}}.\]
If there exist $\lambda\in\mathbb{C}$ and nonzero vector $x=(x_1,\ldots,x_n)^\top\in\mathbb{C}^{n}$ satisfying
\begin{align*}
\mathcal{A}x^{k-1}=\lambda x^{[k-1]},
\end{align*}
then $\lambda$ is called an eigenvalue \cite{Lim,Qi} of $\mathcal{A}$, and $x$ is called an eigenvector of $\mathcal{A}$ corresponding to $\lambda$, where $x^{[k-1]}=(x_{1}^{k-1},\ldots,x_{n}^{k-1})^\top$. If $\lambda$ is a real eigenvalue of $\mathcal{A}$ with a real eigenvector, then $\lambda$ is called an $H$-eigenvalue \cite{Qi} of $\mathcal{A}$.

A hypergraph $\mathcal{H}$ is called $k$-\textit{uniform} if each edge of $\mathcal{H}$ contains exactly $k$ distinct vertices. Let $V(\mathcal{H})$ and $E(\mathcal{H})$ denote the vertex set and the edge set of $\mathcal{H}$, respectively. The \textit{adjacency tensor} \cite{Cooper} of a $k$-uniform hypergraph $\mathcal{H}$, denoted by $\mathcal{A}_{\mathcal{H}}=(a_{i_1i_2\cdots i_k})$, is an order $k$ dimension $|V(\mathcal{H})|$ tensor with entries
\begin{eqnarray*}
a_{i_1i_2\cdots i_k}=\begin{cases}\frac{1}{(k-1)!}~~~~~~~\mbox{if}~\{i_1,i_2,\ldots,i_k\}\in E(\mathcal{H}),\\
0~~~~~~~~~~~~~\mbox{otherwise}.\end{cases}
\end{eqnarray*}
The minimum $H$-eigenvalue of $\mathcal{A}_{\mathcal{H}}$ is called the minimum $H$-eigenvalue of $\mathcal{H}$.

Let $\lambda$ be the minimum $H$-eigenvalue of $\mathcal{H}$, and let $\mathcal{I}$ denote the diagonal tensor such that $(\mathcal{I})_{i\cdots i}=1$ for all $i$. When $k$ is even, we have
\begin{eqnarray*}
\lambda&=&\min\left\{\mathcal{A}_{\mathcal{H}}x^k:x\in{\mathbb{R}^n},\sum_{i\in V(\mathcal{H})}x_i^k=1\right\}\\
&=&k\min\left\{\sum_{\{ i_1,i_2,\ldots,i_k\}\in E(\mathcal{H})}x_{i_1}x_{i_2}\cdots x_{i_k}:x\in{\mathbb{R}^n},\sum_{i\in V(\mathcal{H})}x_i^k=1\right\}.
\end{eqnarray*}
Hence $\mathcal{A}_{\mathcal{H}}-\lambda\mathcal{I}$ is positive semidefinite, that is, $(\mathcal{A}_{\mathcal{H}}-\lambda\mathcal{I})x^k\geq0$ for any real vector $x$. For any nonzero real vector $x$, $(\mathcal{A}_{\mathcal{H}}-\lambda\mathcal{I})x^k=0$ if and only if $x$ is an eigenvector of $\lambda$. If $k=2$ then $\lambda$ is exactly the minimum eigenvalue of an ordinary graph.

\section{Hoffman bound for even uniform hypergraphs}
For a vertex $i$ in a hypergraph $\mathcal{H}$, let $E(i)$ denote the set of edges containing $i$. The degree $d_i$ of $i$ is the number of edges containing $i$, that is, $d_i=|E(i)|$.

We first give the following Hoffman-type bound for even uniform hypergraphs.
\begin{thm}\label{t}
Suppose that $t$ is an odd integer such that $0<t<k$ for an even integer $k$. Let $\mathcal{H}$ be a $k$-uniform hypergraph with $n$ vertices and $m\geq1$ edges, and let $\delta$ and $\lambda$ be the minimum degree and the minimum $H$-eigenvalue of $\mathcal{H}$, respectively. Then
\begin{eqnarray*}
{\alpha _t}(\mathcal{H}) \leqslant \frac{{t\left( {km - n \lambda } \right){{\left( { - \lambda } \right)}^{\frac{t}{{k - t}}}}}}{{\left( {k - t} \right){\delta ^{\frac{k}{{k - t}}}} + \left( {k \delta  - t\lambda } \right){{\left( { - \lambda } \right)}^{\frac{t}{{k - t}}}}}},
\end{eqnarray*}
with equality if and only if there exits a $t$-independent set $S$ such that every vertex in $S$ has degree $\delta$, and every vertex $i$ outside $S$ satisfies
\begin{eqnarray*}
\left| {\{ e:e \in E(i),e\cap S\neq\emptyset \} } \right| = \frac{{{{\left( { - \lambda } \right)}^{\frac{k}{{k - t}}}} + {d_i}{{\left( { - \lambda } \right)}^{\frac{t}{{k - t}}}}}}{{{\delta ^{\frac{t}{{k - t}}}} + {{\left( { - \lambda } \right)}^{\frac{t}{{k - t}}}}}}.
\end{eqnarray*}
\end{thm}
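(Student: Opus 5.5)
The plan is to plug a two-valued test vector into the positive semidefiniteness of $\mathcal{A}_{\mathcal{H}}-\lambda\mathcal{I}$ (valid since $k$ is even) and then optimize over the ratio of the two values. First I need $\lambda<0$. Take an edge $e$ and put $x_i=1$ on $k-1$ of its vertices, $x_i=-1$ on the remaining one, and $x_i=0$ elsewhere. Then $\mathcal{A}_{\mathcal{H}}x^k=-k<0$, so $\lambda<0$ and the powers $(-\lambda)^{t/(k-t)}$ make sense.

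Let $S$ be a $t$-independent set with $|S|=s$, and let $m_S$ be the number of edges meeting $S$. Every such edge meets $S$ in exactly $t$ vertices, so double counting gives
\[
t\,m_S=\sum_{i\in S}d_i\ \geq\ s\delta .
\]
Define $x_i=a>0$ for $i\in S$ and $x_i=-1$ for $i\notin S$. Because $t$ is odd and $k-t$ is odd, an edge meeting $S$ contributes $a^t(-1)^{k-t}=-a^t$ to $\sum_{e}\prod_{i\in e}x_i$, while every other edge contributes $1$. The inequality $(\mathcal{A}_{\mathcal{H}}-\lambda\mathcal{I})x^k\ge 0$ then reads
\[
k(m-m_S)-k\,m_S a^t-\lambda\bigl(s a^k+n-s\bigr)\ \ge 0 .
\]
Substituting $m_S\ge s\delta/t$ and rearranging gives
\[
s\Bigl(\tfrac{k\delta}{t}(1+a^t)+\lambda a^k-\lambda\Bigr)\le km-n\lambda .
\]

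Next I choose $a$ to maximize $g(a)=\frac{k\delta}{t}a^t+\lambda a^k$. The derivative vanishes at $a^{k-t}=\delta/(-\lambda)$, which gives
\[
g(a)=\frac{(k-t)\delta}{t}\Bigl(\frac{\delta}{-\lambda}\Bigr)^{t/(k-t)} .
\]
Plugging this in and multiplying the numerator and denominator by $t(-\lambda)^{t/(k-t)}$ yields exactly the stated bound. The bracket is positive, since $g(a)>0$ and $k\delta/t-\lambda>0$, so the division is legitimate.

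For the equality case, equality forces two things:
\begin{itemize}
\item $m_S=s\delta/t$, which means every vertex of $S$ has degree $\delta$;
\item $(\mathcal{A}_{\mathcal{H}}-\lambda\mathcal{I})x^k=0$, which means $x$ is an eigenvector for $\lambda$.
\end{itemize}
I expect the main obstacle to be converting the eigenvector equation $\mathcal{A}_{\mathcal{H}}x^{k-1}=\lambda x^{[k-1]}$ into the stated combinatorial condition, and checking the converse.

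At $i\notin S$, write $p_i=|\{e\in E(i):e\cap S\neq\emptyset\}|$. Each such edge contains $t$ vertices of $S$ and $k-t-1$ other vertices besides $i$, so the equation becomes
\[
-p_i a^t+(d_i-p_i)=-\lambda .
\]
This gives $p_i=(d_i+\lambda)/(1+a^t)$. I then need to verify that with $a^t=(\delta/(-\lambda))^{t/(k-t)}$ this matches the displayed expression in the theorem, after clearing the factors $(-\lambda)^{t/(k-t)}$.

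At $i\in S$, every edge through $i$ contains $t-1$ further vertices of $S$ and $k-t$ vertices outside. So the equation is $-\delta a^{t-1}=\lambda a^{k-1}$, which holds automatically by the choice of $a$.

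For the converse, if such an $S$ exists, these identities show that $x$ is an eigenvector, so the quadratic-form inequality is tight, and the degree condition makes $m_S=s\delta/t$. Hence equality holds throughout.
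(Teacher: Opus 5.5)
Your derivation of the bound is the paper's argument (same test vector, the count $\sum_{i\in S}d_i=t\,m_S\ge s\delta$, and the choice $a^{k-t}=\delta/(-\lambda)$), and it is correct. The equality case, however, contains a sign error, and it makes the verification you deferred fail.

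For $i\notin S$, the $i$-th coordinate of $\mathcal{A}_{\mathcal{H}}x^{k-1}$ is $\sum_{e\in E(i)}\prod_{j\in e\setminus\{i\}}x_j$. Take an edge through $i$ that meets $S$. The set $e\setminus\{i\}$ has $t$ entries equal to $a$ and $k-t-1$ entries equal to $-1$. Since $k-t-1$ is even, the edge contributes $+a^t$. An edge through $i$ that misses $S$ contributes $(-1)^{k-1}=-1$. The right-hand side is $\lambda x_i^{k-1}=-\lambda$. So the eigen-equation is $p_ia^t-(d_i-p_i)=-\lambda$, i.e.\ $p_i=(d_i-\lambda)/(1+a^t)$, exactly as in the paper.

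Your left-hand side $-p_ia^t+(d_i-p_i)$ is actually $x_i(\mathcal{A}_{\mathcal{H}}x^{k-1})_i$, so it carries an extra factor $x_i=-1$. Your right-hand side $-\lambda$ does not carry that factor. The consistent version would be $x_i(\mathcal{A}_{\mathcal{H}}x^{k-1})_i=\lambda x_i^k=\lambda$.

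Consequently your formula $p_i=(d_i+\lambda)/(1+a^t)$ does not match the theorem. Dividing numerator and denominator of the displayed expression by $(-\lambda)^{t/(k-t)}$ gives $(d_i-\lambda)/(1+a^t)$. The graph $K_2$ already shows your version is false. There $\lambda=-1$, $\delta=d_i=1$ and $a=1$, and the bound $1$ is attained by $S$ a single vertex. The other vertex has $p_i=1$, whereas your formula gives $(d_i+\lambda)/(1+a^t)=0$.

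Once the sign is corrected, the rest of your equality argument works. The equations at $i\in S$ hold automatically, as you show. For the converse, the corrected identities make $x$ an eigenvector, so $s$ equals the bound, and $s\le\alpha_t(\mathcal{H})\le$ bound forces equality. For the ``only if'' direction, state explicitly that $S$ is a maximum $t$-independent set.

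A minor point: if $\delta=0$ then $a=0$, so you should allow $a\ge0$, as the paper does with $c\ge0$. The bracket is still positive because $-\lambda>0$, even though then $g(a)=0$ rather than $g(a)>0$.
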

\begin{proof}
Since $m\geq1$, we have $\lambda<0$. Let $S$ be a $t$-independent set of $\mathcal{H}$ such that $|S|=\alpha_t(\mathcal{H})$. For $c\geq0$, take $x=(x_1,x_2,\ldots,x_n)^\top$ satisfying
$${x_i} =
\begin{cases}
c & i \in S,\\
- 1 & i \notin S.
\end{cases}
$$
Let $\mathcal{A}$ be the adjacency tensor of $\mathcal{H}$. Since $\mathcal{A}-\lambda \mathcal{I}$ is a positive semidefinite tensor, we have
\begin{eqnarray*}
(\mathcal{A}-\lambda \mathcal{I})x^k=-\lambda\sum_{i=1}^nx_i^{k}+k\sum_{\{ i_1,i_2,\ldots,i_k\}\in E}x_{i_1}x_{i_2}\cdots x_{i_k}\geq0,
\end{eqnarray*}
where $E=E(\mathcal{H})$ is the edge set of $\mathcal{H}$.
Since $|S|=\alpha_t(\mathcal{H})$ and $|e\cap S|\in\{0,t\}$ for each edge $e\in E$, we have

\[( {\mathcal{A}}-\lambda \mathcal{I} )x^k = -\lambda\sum_{i=1}^nx_i^k  +k
 \sum_{\substack{\{i_1,i_2,\ldots,i_k\}\in E\\ |\{i_1,i_2,\ldots,i_k\}\cap S|=t}}
{{x_{{i_1}}}{x_{{i_2}}} \cdots {x_{{i_k}}}}  +k
\sum_{\substack{\{i_1,i_2,\ldots,i_k\} \in E\\ \{i_1,i_2,\ldots,i_k\} \cap S = \emptyset}} {{x_{{i_1}}}{x_{{i_2}}} \cdots {x_{{i_k}}}} \]

\[= - {\lambda} \alpha_{t}(\mathcal{H}) {c^k} - \lambda (n - {\alpha_{t}}(\mathcal{H}) ) - kc^tt^{-1}\sum_{i\in S}d_i+ k\left( {m - t^{-1}\sum_{i\in S}d_i}\right) \geqslant 0.\]
Then
\[{\alpha _t}(\mathcal{H})\left( {-\lambda {c^k} + \lambda } \right) + km -n \lambda  \geqslant \left( {k{c^t} + k} \right)t^{-1}\sum_{i\in S}d_i \geqslant \left( {k{c^t} + k} \right)\frac{{{\alpha _t}(\mathcal{H})\delta }}{t},\]

\[\left( {-t \lambda \left( {1 - {c^k}} \right) + k \delta \left( {1 + {c^t}} \right)} \right){\alpha _t}(\mathcal{H})\leq t\left( {km - n\lambda } \right).\]
Take $c = {{\left( \frac{\delta }{-\lambda } \right)}^{\frac{1}{k-t}}}$, then
$ -t \lambda \left( {1 - {c}^k} \right) + k \delta \left( {1 + {c}^t} \right) = \left( {k - t} \right)\delta {c}^t + k \delta - t \lambda  > 0$ and
\[{\alpha _t}(\mathcal{H}) \leqslant \frac{{t\left( {km - n \lambda } \right)}}{{\left( {k - t} \right)\delta {c^t} + k \delta  - t \lambda }}=\frac{{t\left( {km - n \lambda } \right){{\left( { - \lambda } \right)}^{\frac{t}{{k - t}}}}}}{{\left( {k - t} \right){\delta ^{\frac{k}{{k - t}}}} + \left( {k \delta  - t\lambda } \right){{\left( { - \lambda } \right)}^{\frac{t}{{k - t}}}}}},\]
with equality if and only if every vertex in $S$ has degree $\delta$, and $x$ is an eigenvector of $\lambda$. Recall that $x$ is a vector satisfying
$${x_i} =
\begin{cases}
 {{\left( \frac{\delta }{-\lambda } \right)}^{\frac{1}{k-t}}} & i \in S,\\
-1 & i \notin S.
\end{cases}
$$
When every vertex in $S$ has degree $\delta$, $\mathcal{A}x^{k-1}=\lambda x^{[k-1]}$ is equivalent to
\begin{eqnarray*}
-\lambda=\lambda x_i^{k-1}=\sum_{\{i,i_2,\ldots,i_k\}\in E}x_{i_2}\cdots x_{i_k},~i\notin S.
\end{eqnarray*}
For $i\notin S$, let $d'_i = \left| \{e: e \in E(i),e\cap S \neq \emptyset \}\right|$. Then the above equation is equivalent to
$$-\lambda = d'_i {{\left( \frac{\delta }{-\lambda } \right)}^{\frac{t}{k-t}}} - (d_i - d'_i),$$
that is,
$$d'_i = \left| {\{ e:e \in E(i),e\cap S \ne \emptyset \} } \right| = \frac{{{{\left( { - \lambda } \right)}^{\frac{k}{{k - t}}}} + {d_i}{{\left( { - \lambda } \right)}^{\frac{t}{{k - t}}}}}}{{{\delta ^{\frac{t}{{k - t}}}} + {{\left( { - \lambda } \right)}^{\frac{t}{{k - t}}}}}}.$$
\end{proof}
The following is an example for the equality case of Theorem \ref{t}.
\begin{example}\label{z}
Suppose that $\mathcal{H}$ is a $k$-uniform hypergraph with a partition $V(\mathcal{H})=V_1\cup V_2$ such that $|e\cap V_1|=t$ and $|e\cap V_2|=k-t$ are both odd for each $e\in E(\mathcal{H})$. If $\mathcal{H}$ is $d$-regular, then $d$ is the spectral radius of $\mathcal{H}$, and $-d$ is the minimum $H$-eigenvalue of $\mathcal{H}$ (see \cite[Theorem 2.3]{Shao}). In this case, $V_1$ is a $t$-independent set satisfying conditions in Theorem \ref{t}.
\end{example}
Take $t=1$ in Theorem \ref{t}, we get the following bound for the strongly independence number $\alpha_1(\mathcal{H})$.
\begin{cor}\label{1}
Let $\mathcal{H}$ be a $k$-uniform hypergraph with $n$ vertices and $m\geq1$ edges, and let $\delta$ and $\lambda$ be the minimum degree and the minimum $H$-eigenvalue of $\mathcal{H}$, respectively. If $k$ is even, then
\[{\alpha}_1(\mathcal{H}) \leqslant \frac{{\left( {km - n \lambda } \right){{\left( { - \lambda } \right)}^{\frac{1}{{k - 1}}}}}}{{\left( {k - 1} \right){\delta ^{\frac{k}{{k - 1}}}} + \left( {k \delta  - \lambda } \right){{\left( { - \lambda } \right)}^{\frac{1}{{k - 1}}}}}},\]
with equality if and only if there exits a $1$-independent set $S$ such that every vertex in $S$ has degree $\delta$, and every vertex $i$ outside $S$ satisfies
\begin{eqnarray*}
\left| {\{ e:e \in E(i),e\cap S \ne \emptyset \} } \right| = \frac{{{{\left( { - \lambda } \right)}^{\frac{k}{{k - 1}}}} + {d_i}{{\left( { - \lambda } \right)}^{\frac{1}{{k - 1}}}}}}{{{\delta ^{\frac{1}{{k - 1}}}} + {{\left( { - \lambda } \right)}^{\frac{1}{{k - 1}}}}}}.
\end{eqnarray*}
\end{cor}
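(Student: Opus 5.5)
This is the special case $t=1$ of Theorem \ref{t}, so the plan is to check the hypotheses and substitute, without redoing the tensor argument. Since $k$ is even, $k\geq2$, and $t=1$ is an odd integer with $0<1<k$. The remaining hypotheses of Theorem \ref{t} are exactly those of the corollary: $\mathcal{H}$ is $k$-uniform with $n$ vertices and $m\geq1$ edges, and $\delta$ and $\lambda$ are its minimum degree and minimum $H$-eigenvalue.

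Next I would substitute $t=1$ into the bound of Theorem \ref{t}. The exponent $\frac{t}{k-t}$ becomes $\frac{1}{k-1}$, and $\frac{k}{k-t}$ becomes $\frac{k}{k-1}$. The factor $t$ in the numerator disappears. In the denominator, $k-t$ becomes $k-1$ and $k\delta-t\lambda$ becomes $k\delta-\lambda$. This gives exactly the displayed bound for $\alpha_1(\mathcal{H})$.

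For the equality statement, I would use the equality characterization of Theorem \ref{t} with $t=1$. Equality holds if and only if some $1$-independent set $S$ has all its vertices of degree $\delta$ and every $i\notin S$ satisfies
\begin{eqnarray*}
\left|\{e:e\in E(i),e\cap S\neq\emptyset\}\right|=\frac{(-\lambda)^{\frac{k}{k-1}}+d_i(-\lambda)^{\frac{1}{k-1}}}{\delta^{\frac{1}{k-1}}+(-\lambda)^{\frac{1}{k-1}}}.
\end{eqnarray*}
This is the condition stated in the corollary.

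There is no real obstacle here. The only point to confirm is that $t=1$ satisfies the parity requirement of Theorem \ref{t}. That requirement is what makes $c^t\geq0$ and $(-1)^{k-t}=-1$ in its proof: an edge meeting $S$ in $t$ vertices then contributes $-c^t$ to $\mathcal{A}x^k$. Since $t=1$ is odd, the requirement holds, and the corollary follows directly.
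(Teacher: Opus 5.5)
Your proposal is correct and matches the paper, which obtains this corollary simply by setting $t=1$ in Theorem~\ref{t}; your substitutions in both the bound and the equality condition check out. One small quibble with your closing remark: $c^t\geq0$ holds for every $t$ because $c\geq0$. The oddness of $t$ is needed only to make $(-1)^{k-t}=-1$, so that each edge meeting $S$ contributes $-c^t$.
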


\begin{rem}\label{d}
If $\mathcal{H}=G$ is a $d$-regular graph in Theorem \ref{t} ($k=2,\delta=d$), then the Hoffman bound $\alpha(G) \leqslant\frac{-\lambda n}{{d - {\lambda }}}$ follows from the upper bound in Theorem \ref{t}, and the equality holds if and only if there exits an independent set $S$ such that every vertex not in $S$ is adjacent to exactly $-\lambda$ vertices of $S$.
\end{rem}
Let $\mathcal{H}$ be a $k$-uniform hypergraph with $n$ vertices. We say that a real tensor $\mathcal{A}=(a_{i_1i_2\cdots i_k})$ of dimension $n$ is a \textit{signed adjacency tensor} of $\mathcal{H}$ if
\begin{eqnarray*}
|a_{i_1i_2\cdots i_k}|=\begin{cases}\frac{1}{(k-1)!}~~~~~~~~~\mbox{if}~\{i_1,i_2,\ldots,i_k\}\in E(\mathcal{H}),\\
0~~~~~~~~~~~~~~~\mbox{otherwise}.\end{cases}
\end{eqnarray*}
Let $\mathcal{S}(\mathcal{H})$ denote the set of signed adjacency tensors of $\mathcal{H}$.
\begin{thm}
Suppose that $t$ is an even integer such that $0<t<k$ for an even integer $k$. Let $\mathcal{H}$ be a $k$-uniform hypergraph with $n$ vertices and $m\geq1$ edges, and let $\delta$ be the minimum degree of $\mathcal{H}$. Then there exists $\mathcal{A}\in\mathcal{S}(\mathcal{H})$ such that
\begin{eqnarray*}
{\alpha _t}(\mathcal{H}) \leqslant \frac{{t\left( {km - n \lambda } \right){{\left( { - \lambda } \right)}^{\frac{t}{{k - t}}}}}}{{\left( {k - t} \right){\delta ^{\frac{k}{{k - t}}}} + \left( {k \delta  - t\lambda } \right){{\left( { - \lambda } \right)}^{\frac{t}{{k - t}}}}}},
\end{eqnarray*}
where $\lambda$ is the minimum $H$-eigenvalue of $\mathcal{A}$.
\end{thm}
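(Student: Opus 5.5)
The plan is to rerun the proof of Theorem \ref{t} for a suitably signed tensor. When $t$ is even, the test vector $x$ with $x_i=c$ on $S$ and $x_i=-1$ off $S$ gives, on an edge $e$ with $|e\cap S|=t$, the product $c^t(-1)^{k-t}=c^t$. That term is positive, whereas the argument for odd $t$ needed it to be negative. We therefore choose the signs so that each such edge contributes negatively. Let $S$ be a maximum $t$-independent set of $\mathcal{H}$. Define $\mathcal{A}\in\mathcal{S}(\mathcal{H})$ by $a_{i_1\cdots i_k}=-\frac{1}{(k-1)!}$ whenever $\{i_1,\ldots,i_k\}\in E(\mathcal{H})$ meets $S$ (necessarily in exactly $t$ vertices), and $a_{i_1\cdots i_k}=\frac{1}{(k-1)!}$ for edges disjoint from $S$. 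Because $\mathcal{A}$ is symmetric, $\mathcal{A}x^k=k\sum_{e\in E}\sigma_e\prod_{i\in e}x_i$ with $\sigma_e=\pm1$.

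Let $\lambda$ be the minimum $H$-eigenvalue of $\mathcal{A}$. For even $k$ the variational characterization in Section 2 carries over verbatim to any real symmetric tensor: $\lambda=\min\{\mathcal{A}x^k:\sum x_i^k=1\}$, so $\mathcal{A}-\lambda\mathcal{I}$ is positive semidefinite. We also need $\lambda<0$. Pick an edge $e$ and set $x_i=1$ on $e$ except one vertex where $x_i=-\sigma_e$, with $x_i=0$ elsewhere. Then $\mathcal{A}x^k=-k<0$.

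Now evaluate $(\mathcal{A}-\lambda\mathcal{I})x^k\ge0$ with $x$ as above and $c\ge0$. The term $-\lambda\sum_i x_i^k$ equals $-\lambda\alpha_t c^k-\lambda(n-\alpha_t)$, since $k$ is even. An edge meeting $S$ contributes $k\cdot(-1)\cdot c^t(-1)^{k-t}=-kc^t$, and there are exactly $t^{-1}\sum_{i\in S}d_i$ such edges. An edge disjoint from $S$ contributes $k(-1)^k=k$. This reproduces exactly the displayed identity in the proof of Theorem \ref{t}:
\[-\lambda\alpha_t c^k-\lambda(n-\alpha_t)-kc^tt^{-1}\sum_{i\in S}d_i+k\Big(m-t^{-1}\sum_{i\in S}d_i\Big)\ge0.\]
From here the remainder is word for word the same. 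Bound $\sum_{i\in S}d_i\ge\alpha_t\delta$ to get $\big(-t\lambda(1-c^k)+k\delta(1+c^t)\big)\alpha_t\le t(km-n\lambda)$. Then choose $c=(\delta/(-\lambda))^{1/(k-t)}$, which makes the coefficient equal to $(k-t)\delta c^t+k\delta-t\lambda>0$. Rearranging gives the stated bound.

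The main obstacle is that $\mathcal{A}$ depends on $S$, so $\lambda$ is the eigenvalue of this specific signed tensor, which is what the existential statement allows. The facts to check carefully are that semidefiniteness of $\mathcal{A}-\lambda\mathcal{I}$ holds for signed tensors, and that $\lambda<0$, which the one-edge test vector above settles. If $\delta=0$ the choice $c=0$ still works, so no separate case is needed.
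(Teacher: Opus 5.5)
Your proposal is correct and matches the paper's proof: the paper uses the same signed tensor (negative on edges meeting $S$, positive on edges disjoint from $S$), the same inequality chain, and the same choice $c=(\delta/(-\lambda))^{1/(k-t)}$. Your test vector has $-1$ off $S$ where the paper's has $+1$, which is immaterial because $k$ and $k-t$ are even; your one-edge check that $\lambda<0$, together with the symmetric-tensor variational characterization, supplies details the paper leaves implicit.
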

\begin{proof}
Let $S$ be a $t$-independent set of $\mathcal{H}$ such that $|S|=\alpha_t(\mathcal{H})$. For $c\geq0$, take $x=(x_1,x_2,\ldots,x_n)^\top$ satisfying
$${x_i} =
\begin{cases}
c & i \in S,\\
1 & i \notin S.
\end{cases}
$$
We can choose $\mathcal{A}=(a_{i_1i_2\cdots i_k})\in\mathcal{S}(\mathcal{H})$ such that
\begin{eqnarray*}
a_{i_1i_2\cdots i_k}=\begin{cases}\frac{1}{(k-1)!}~~~~~~~~~\mbox{if}~\{i_1,i_2,\ldots,i_k\}\in E(\mathcal{H})~\mbox{and}~|\{i_1,i_2,\ldots,i_k\}\cap S|=0,\\
-\frac{1}{(k-1)!}~~~~~~~\mbox{if}~\{i_1,i_2,\ldots,i_k\}\in E(\mathcal{H})~\mbox{and}~|\{i_1,i_2,\ldots,i_k\}\cap S|=t,\\
0~~~~~~~~~~~~~~~\mbox{otherwise}.\end{cases}
\end{eqnarray*}
Let $\lambda$ be the minimum $H$-eigenvalue of $\mathcal{A}$, then $\mathcal{A}-\lambda \mathcal{I}$ is a positive semidefinite tensor. Since $|S|=\alpha_t(\mathcal{H})$ and $|e\cap S|\in\{0,t\}$ for each edge $e\in E(\mathcal{H})$, we have

\[( {\mathcal{A}}-\lambda \mathcal{I} )x^k = -\lambda\sum_{i=1}^nx_i^k-k
 \sum_{\substack{\{i_1,i_2,\ldots,i_k\}\in E(\mathcal{H})\\ |\{i_1,i_2,\ldots,i_k\}\cap S|=t}}
{{x_{{i_1}}}{x_{{i_2}}} \cdots {x_{{i_k}}}}  +k
\sum_{\substack{\{i_1,i_2,\ldots,i_k\} \in E(\mathcal{H})\\ \{i_1,i_2,\ldots,i_k\} \cap S = \emptyset}} {{x_{{i_1}}}{x_{{i_2}}} \cdots {x_{{i_k}}}} \]

\[= - {\lambda} \alpha_{t}(\mathcal{H}) {c^k} - \lambda (n - {\alpha_{t}}(\mathcal{H}) ) - kc^tt^{-1}\sum_{i\in S}d_i+ k\left( {m - t^{-1}\sum_{i\in S}d_i}\right) \geqslant 0.\]
Then
\[{\alpha _t}(\mathcal{H})\left( {-\lambda {c^k} + \lambda } \right) + km -n \lambda  \geqslant \left( {k{c^t} + k} \right)t^{-1}\sum_{i\in S}d_i \geqslant \left( {k{c^t} + k} \right)\frac{{{\alpha _t}(\mathcal{H})\delta }}{t},\]

\[\left( {-t \lambda \left( {1 - {c^k}} \right) + k \delta \left( {1 + {c^t}} \right)} \right){\alpha _t}(\mathcal{H})\leq t\left( {km - n\lambda } \right).\]
Take $c = {{\left( \frac{\delta }{-\lambda } \right)}^{\frac{1}{k-t}}}$, then
$ -t \lambda \left( {1 - {c}^k} \right) + k \delta \left( {1 + {c}^t} \right) = \left( {k - t} \right)\delta {c}^t + k \delta - t \lambda  > 0$ and
\[{\alpha _t}(\mathcal{H}) \leqslant \frac{{t\left( {km - n \lambda } \right)}}{{\left( {k - t} \right)\delta {c^t} + k \delta  - t \lambda }}=\frac{{t\left( {km - n \lambda } \right){{\left( { - \lambda } \right)}^{\frac{t}{{k - t}}}}}}{{\left( {k - t} \right){\delta ^{\frac{k}{{k - t}}}} + \left( {k \delta  - t\lambda } \right){{\left( { - \lambda } \right)}^{\frac{t}{{k - t}}}}}}.\]
\end{proof}

\section{Independence number and Lov\'{a}sz number of graphs}
Take $k=2$ in Theorem \ref{t}, we get the following spectral bound for the independence number of a graph.
\begin{cor}\label{2}
Let $G$ be an $n$-vertex graph with minimum degree $\delta$ and average degree $\overline{d}>0$, and let $\lambda$ be the minimum eigenvalue of $G$. Then
$$\alpha(G)\leq\frac{-\lambda n(\overline{d} - \lambda)}{(\delta - \lambda)^{2}},$$
with equality if and only if there exists an independent set $S$ such that every vertex in $S$ has degree $\delta$, and every vertex $i$ not in $S$ is adjacent to exactly $\frac{-\lambda(d_i - \lambda)}{\delta - \lambda}$ vertices of $S$.
\end{cor}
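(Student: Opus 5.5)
We specialize Theorem \ref{t} to $k=2$ and $t=1$, which are admissible since $t=1$ is odd and $0<t<k$ with $k$ even. The work is then to simplify both the bound and the equality condition.

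For $k=2$ the hypergraph $\mathcal{H}$ is the graph $G$, and $\alpha_1(G)=\alpha(G)$. The minimum $H$-eigenvalue $\lambda$ is the minimum eigenvalue of $A_G$. The number of edges is $m=n\overline{d}/2$, so $\overline{d}>0$ gives $m\geq1$ and Theorem \ref{t} applies. With $\frac{t}{k-t}=1$ and $\frac{k}{k-t}=2$, the bound of Theorem \ref{t} reads
\[
\alpha(G)\leq \frac{(2m-n\lambda)(-\lambda)}{\delta^2+(2\delta-\lambda)(-\lambda)}.
\]
Substituting $2m=n\overline{d}$ makes the numerator $-\lambda n(\overline{d}-\lambda)$. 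The denominator is $\delta^2-2\delta\lambda+\lambda^2=(\delta-\lambda)^2$, which gives exactly the stated bound.

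Next we translate the equality condition. Theorem \ref{t} requires a $1$-independent set $S$, that is, an independent set of $G$. Every vertex of $S$ must have degree $\delta$. For each $i\notin S$, the quantity $|\{e\in E(i): e\cap S\neq\emptyset\}|$ is the number of neighbours of $i$ in $S$, and it must equal
\[
\frac{(-\lambda)^2+d_i(-\lambda)}{\delta+(-\lambda)}=\frac{-\lambda(d_i-\lambda)}{\delta-\lambda}.
\]
This is precisely the equality condition stated in Corollary \ref{2}.

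There is no real obstacle; the only care needed is to check that the hypotheses of Theorem \ref{t} are met. In particular, $\lambda<0$ because $m\geq1$, so $\delta-\lambda>0$ and the quotient is well defined even when $\delta=0$.
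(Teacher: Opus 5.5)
Your proposal is correct and takes the same approach as the paper, which obtains Corollary~\ref{2} simply by setting $k=2$ (forcing $t=1$) in Theorem~\ref{t}. Your simplifications are exactly what that specialization requires: the numerator becomes $-\lambda n(\overline{d}-\lambda)$ via $2m=n\overline{d}$, the denominator collapses to $(\delta-\lambda)^2$, and the equality count reduces to $\frac{-\lambda(d_i-\lambda)}{\delta-\lambda}$.
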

In the necessary and sufficient conditions of Corollary \ref{2}, we always have
\begin{eqnarray*}
\frac{-\lambda(d_i-\lambda)}{\delta-\lambda}\geq-\lambda.
\end{eqnarray*}
Motivated by this fact, we give the following spectral condition for determining the independence number, the Shannon capacity and the Lov\'{a}sz number of a graph.
\begin{thm}\label{theta2}
Let $G$ be an $n$-vertex graph with minimum eigenvalue $\lambda<0$, and let $S$ be an independent set such that every vertex $i\notin S$ satisfies
\begin{eqnarray*}
|\{j:\{i,j\}\in E(G),j\in S\}|\geq-\lambda.
\end{eqnarray*}
Then
\begin{eqnarray*}
\alpha(G) = \Theta(G) = \vartheta(G) = |S|.
\end{eqnarray*}
\end{thm}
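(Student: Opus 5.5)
The plan is to show $\vartheta(G)\leq |S|$ using the matrix characterization of $\vartheta(G)$ in Lemma \ref{zhou1}. Since $S$ is independent, $|S|\leq\alpha(G)$, and Lemma \ref{Shannon} gives $\alpha(G)\leq\Theta(G)\leq\vartheta(G)$. Together these force $\alpha(G)=\Theta(G)=\vartheta(G)=|S|$.

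\emph{Choice of $M$.} Take $M=A_G-\lambda I$. Since $\lambda$ is the minimum eigenvalue of $A_G$, $M$ is positive semidefinite. Its off-diagonal entries are those of $A_G$, so $(M)_{ij}=0$ for nonadjacent distinct $i,j$, and condition (a) holds. Every diagonal entry is $(M)_{uu}=-\lambda>0$.

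\emph{Choice of $x$.} Rather than picking $x$ and then computing $M^\#$, I will pick $y$ and set $x=My$; this makes $x\in R(M)$ automatic. Let $y$ be the indicator vector of $S$. For $u\in S$, independence of $S$ gives
\begin{eqnarray*}
x_u=(A_Gy)_u-\lambda y_u=-\lambda .
\end{eqnarray*}
For $u\notin S$,
\begin{eqnarray*}
x_u=(A_Gy)_u=|\{j:\{u,j\}\in E(G),j\in S\}|\geq-\lambda>0 .
\end{eqnarray*}
Hence $x_u\neq0$ for all $u$, and condition (b) holds.

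\emph{Evaluating $x^\top M^\# x$.} Since $x=My$ and $MM^\#M=M$,
\begin{eqnarray*}
x^\top M^\#x=y^\top M^\top M^\# M y=y^\top MM^\#My=y^\top My,
\end{eqnarray*}
using that $M$ is symmetric. Because $S$ is independent, $y^\top A_Gy=0$, so
\begin{eqnarray*}
y^\top My=-\lambda|S|.
\end{eqnarray*}

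\emph{Evaluating the maximum.} For $u\in S$,
\begin{eqnarray*}
\frac{(M)_{uu}}{x_u^2}=\frac{-\lambda}{\lambda^2}=\frac{1}{-\lambda}.
\end{eqnarray*}
For $u\notin S$, the hypothesis $x_u\geq-\lambda$ gives
\begin{eqnarray*}
\frac{(M)_{uu}}{x_u^2}=\frac{-\lambda}{x_u^2}\leq\frac{-\lambda}{\lambda^2}=\frac{1}{-\lambda}.
\end{eqnarray*}
So $\max_u (M)_{uu}/x_u^2=1/(-\lambda)$, attained on $S$ (or on $V(G)$ if $S$ is empty). Lemma \ref{zhou1} then yields
\begin{eqnarray*}
\vartheta(G)\leq(-\lambda|S|)\cdot\frac{1}{-\lambda}=|S|,
\end{eqnarray*}
which completes the argument.

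The main step to verify carefully is the identity $x^\top M^\#x=y^\top My$ for $x=My$. It uses only $MM^\#M=M$, the symmetry of $M$, and the existence of $M^\#$, which holds because $M$ is symmetric so $\mbox{rank}(M)=\mbox{rank}(M^2)$. The other step to check is that $x_u\neq0$ everywhere, and this is exactly where the hypothesis $\lambda<0$ together with the neighbour-count condition is used.
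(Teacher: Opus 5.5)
Your proposal is correct and follows the paper's proof essentially step for step. Both take $M=A_G-\lambda I$ and $x=My$ with $y$ the indicator of $S$, use $MM^\#M=M$ to get $x^\top M^\#x=y^\top My=-\lambda|S|$, bound $\max_u (M)_{uu}/x_u^2$ by $1/(-\lambda)$, and then apply Lemma \ref{zhou1} and Lemma \ref{Shannon}. Your parenthetical about $S$ being empty is unnecessary: the hypothesis together with $\lambda<0$ already forces $S\neq\emptyset$.
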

\begin{proof}
Let $A$ be the adjacency matrix of $G$. Then $M=A-\lambda I$ is positive semidefinite, where $I$ is the identity matrix. Let $y$ be the indicator vector for $S$, that is
$${y_i} =
\begin{cases}
1 & i \in S,\\
0 & i \notin S.
\end{cases}
$$
Since every vertex $i\notin S$ satisfies $|\{j:\{i,j\}\in E(G),j\in S\}|\geq-\lambda>0$, we know that $x=My$ is a positive vector and
\begin{eqnarray*}
\min_{u\in V(G)}x_u=-\lambda.
\end{eqnarray*}
So we have
\[x^\top M^\#x\max_{u\in V(G)}\frac{(M)_{uu}}{x_u^2}=y^\top M{M^\# }My\mathop {\max }\limits_{u \in V\left( G \right)} \frac{{{{\left( M \right)}_{uu}}}}{{x_u^2}} =\frac{1}{-\lambda} {y^\top}My= |S|.\]
By Lemma \ref{zhou1}, we have
$$\alpha(G) \leq \vartheta(G)\leq x^\top M^\#x\max_{u\in V(G)}\frac{(M)_{uu}}{x_u^2}=  |S|.$$
Since $|S|\leq\alpha(G)$, by Lemma \ref{Shannon}, we have
\[\alpha (G) = \Theta \left( G \right) = \vartheta (G) = |S|.\]
\end{proof}
A regular graph attaining the Hoffman bound must have an independent set $S$ satisfying the condition in Theorem \ref{theta2} (see Theorem \ref{Hoffman}). The following is an nonregular example for Theorem \ref{theta2}.
\begin{example}
Let $G$ be a graph with vertex set $\{1,2,\ldots,n\}$ and minimum eigenvalue $\lambda_0$, and let $H=G(p_1,\ldots,p_n)$ be the graph obtained from $G$ by attaching $p_i$ pendant vertices at vertex $i$, where $p_1\geq\cdots\geq p_n\geq1$ and $\lambda_0+p_n-p_n^{-1}p_1\geq0$. Then
\begin{eqnarray*}
\alpha(H)=\Theta(H)=\vartheta(H)=p_1+\cdots+p_n.
\end{eqnarray*}
\end{example}
\begin{proof}
The adjacency matrix of $H$ can be written as $A_H=\begin{pmatrix}0&B\\B^\top&A_G\end{pmatrix}$, where $A_G$ is the adjacency matrix of $G$ and $B^\top B={\rm diag}(p_1,\ldots,p_n)$. Notice that
\begin{eqnarray*}
A_H+p_nI=\begin{pmatrix}p_nI&B\\B^\top&A_G+p_nI\end{pmatrix}
\end{eqnarray*}
is positive semidefinite, because the Schur complement $A_G+p_nI-p_n^{-1}B^\top B$ is positive semidefinite. Hence $p_n\geq-\lambda$, where $\lambda$ is the minimum eigenvalue of $H$. All new added pendant vertices form an independent set $S$ in $H$, and $S$ satisfies the condition in Theorem \ref{theta2}. Hence
\begin{eqnarray*}
\alpha(H)=\Theta(H)=\vartheta(H)=|S|=p_1+\cdots+p_n.
\end{eqnarray*}
\end{proof}
Let $G$ be an $n$-vertex $d$-regular graph with minimum eigenvalue $\lambda$. In \cite[Theorem 9]{L}, Lov\'{a}sz proved that
\begin{eqnarray*}
\vartheta(G)\leq\frac{-\lambda n}{d-\lambda}.
\end{eqnarray*}
We next extend this result to general graphs, that is, the upper bound in Corollary \ref{2} is also an upper bound for $\vartheta(G)$.
\begin{thm}\label{theta1}
Let $G$ be an $n$-vertex graph with minimum degree $\delta$ and average degree $\overline{d}>0$, and let $\lambda$ be the minimum eigenvalue of $G$. Then
$$\alpha(G) \leq \vartheta(G) \leq \frac{-\lambda n(\overline{d} - \lambda)}{(\delta - \lambda)^{2}}.$$
\end{thm}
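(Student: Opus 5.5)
The plan is to apply Lemma \ref{zhou1} with the same matrix as in the proof of Theorem \ref{theta2}, but with the all-ones vector in place of the indicator vector of $S$. Let $A$ be the adjacency matrix of $G$ and set $M=A-\lambda I$, which is positive semidefinite because $\lambda$ is the minimum eigenvalue. Off the diagonal, $M$ agrees with $A$, so $(M)_{ij}=0$ whenever $i,j$ are distinct nonadjacent vertices. Hence condition (a) of Lemma \ref{zhou1} holds. Since $\overline{d}>0$, $G$ has at least one edge. The trace of $A$ is $0$ and $A\neq 0$, so $\lambda<0$ (indeed $\lambda\le -1$).

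Next, let $\mathbf{1}$ be the all-ones vector and put $x=M\mathbf{1}$. Then $x\in R(M)$, and its entries are
\begin{eqnarray*}
x_u=d_u-\lambda\geq \delta-\lambda>0 ,
\end{eqnarray*}
so condition (b) holds and $(M,x)\in\mathcal{M}(G)$.

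Now evaluate the quantity in Lemma \ref{zhou1}. For the first factor, use $MM^\#M=M$:
\begin{eqnarray*}
x^\top M^\# x=\mathbf{1}^\top MM^\#M\mathbf{1}=\mathbf{1}^\top M\mathbf{1}=\mathbf{1}^\top A\mathbf{1}-\lambda n=n\overline{d}-\lambda n .
\end{eqnarray*}
For the second factor, $(M)_{uu}=-\lambda$ for every $u$ and $x_u^2\geq(\delta-\lambda)^2$, so
\begin{eqnarray*}
\max_{u\in V(G)}\frac{(M)_{uu}}{x_u^2}\leq\frac{-\lambda}{(\delta-\lambda)^2}.
\end{eqnarray*}
Multiplying the two gives $\vartheta(G)\leq\frac{-\lambda n(\overline{d}-\lambda)}{(\delta-\lambda)^2}$. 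The bound $\alpha(G)\le\vartheta(G)$ comes from Lemma \ref{Shannon}, or directly from Lemma \ref{zhou1}.

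There is no real obstacle here. The only things to check are that $\lambda<0$, so that $M$ has a nonzero, non-positive-definite structure and $(M)_{uu}>0$, and that $x$ has no zero entries; both are handled above. The identity $MM^\#M=M$ is exactly what removes the need to compute the group inverse explicitly.
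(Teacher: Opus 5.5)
Your proposal is correct and matches the paper's proof: the same pair $(M,x)=(A-\lambda I,\,M\mathbf{1})$ is plugged into Lemma \ref{zhou1}, and $MM^\#M=M$ reduces the first factor to $\mathbf{1}^\top M\mathbf{1}$. You also check $\lambda<0$ and conditions (a) and (b) explicitly, which the paper leaves implicit. (Your $\leq$ for the maximum is in fact an equality, since $\delta$ is attained.)
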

\begin{proof}
Let $A$ be the adjacency matrix of $G$. Then $M=A-\lambda I$ is positive semidefinite, where $I$ is the identity matrix. Let $x= M\mathrm{\mathbf{1}}$, where $\mathbf{1}$ is the all-ones vector. Then
$$x^\top M^\#x\max_{u\in V(G)}\frac{(M)_{uu}}{x_u^2}=\mathbf{1}^\top MM^\#M{\mathbf{1}}\mathop {\max }\limits_{u \in V\left( G \right)} {\frac{{ - \lambda }}{{{{\left( {{d_u} - \lambda } \right)}^2}}}}$$

$$=\frac{-\lambda}{(\delta-\lambda)^2}  {\mathbf{1}^\top M{\mathbf{1}}}=\frac{-\lambda(2|E(G)|-n\lambda)}{(\delta-\lambda)^2}=\frac{-\lambda n(\overline{d} - \lambda)}{(\delta - \lambda)^{2}}.$$
By Lemma \ref{zhou1}, we have
$$\alpha(G) \leq \vartheta(G) \leq \frac{-\lambda n(\overline{d} - \lambda)}{(\delta - \lambda)^{2}}.$$
\end{proof}
The following example shows that there exists a family of graphs such that our upper bound in Theorem \ref{theta1} is smaller than the ratio-type bounds in Theorems \ref{thm1.2} and \ref{thm1.3}.
\begin{example}
Let $G_i$ be an $r_i$-regular graph with $n_i$ vertices ($i=1,2$), and let $G=G_1\vee G_2$ denote the join of $G_1$ and $G_2$, i.e., the graph obtained from $G_1\cup G_2$ by joining each vertex of $G_1$ to each vertex of $G_2$. Let $\beta_1=\beta_1(n_1,n_2,r_1,r_2)$, $\beta_2=\beta_2(n_1,n_2,r_1,r_2)$ and $\beta_3=\beta_3(n_1,n_2,r_1,r_2)$ be the ratio-type bounds on $\alpha(G)$ from Theorems \ref{theta1}, \ref{thm1.2} and \ref{thm1.3}, respectively. When $r_1,n_2$ are positive constants and $n_1$ is sufficiently large, we have
\begin{eqnarray*}
\beta_1<\beta_2<\beta_3.
\end{eqnarray*}
\end{example}
\begin{proof}
Suppose that $r_1=\lambda_1(G_1)\geq\cdots\geq\lambda_{n_1}(G_1)$ are eigenvalues of $G_1$, and $r_2=\lambda_1(G_2)\geq\cdots\geq\lambda_{n_2}(G_2)$ are eigenvalues of $G_2$. From \cite[Theorem 2.1.8]{Cvetkovic1}, we know that
\begin{eqnarray*}
\frac{r_1+r_2\pm\sqrt{(r_1-r_2)^2+4n_1n_2}}{2},\lambda_2(G_1),\ldots,\lambda_{n_1}(G_1),\lambda_2(G_2),\ldots,\lambda_{n_2}(G_2)
\end{eqnarray*}
are eigenvalues of $G=G_1\vee G_2$, and the maximum eigenvalue of $G$ is
\begin{eqnarray*}
\lambda_{max}=\frac{r_1+r_2+\sqrt{(r_1-r_2)^2+4n_1n_2}}{2}.
\end{eqnarray*}
When $r_1,n_2$ are positive constants and $n_1$ is sufficiently large, the minimum degree of $G$ is $\delta=r_1+n_2$, and the minimum eigenvalue of $G$ is $\lambda_{min}=\frac{r_1+r_2-\sqrt{(r_1-r_2)^2+4n_1n_2}}{2}$. Then
\begin{eqnarray*}
\lambda_{max}+\lambda_{min}=r_1+r_2,~\lambda_{max}\lambda_{min}=r_1r_2-n_1n_2.
\end{eqnarray*}
By computation, the upper bound in Theorem \ref{thm1.2} is
\begin{eqnarray*}
\beta_2=\frac{(n_1+n_2)(n_1n_2-r_1r_2)}{\delta^2+n_1n_2-r_1r_2}.
\end{eqnarray*}
Since the largest Laplacian eigenvalue of $G$ is $\mu=n_1+n_2$, the upper bound in Theorem \ref{thm1.3} is $\beta_3=n_1+n_2-\delta=n_1-r_1$. Then
\begin{eqnarray*}
\beta_3-\beta_2&=&\frac{\delta^2(n_1-r_1)-\delta(n_1n_2-r_1r_2)}{\delta^2+n_1n_2-r_1r_2}=\frac{\delta((r_1+n_2)(n_1-r_1)-n_1n_2+r_1r_2)}{\delta^2+n_1n_2-r_1r_2}\\
&=&\frac{\delta r_1(n_1-n_2-r_1+r_2)}{\delta^2+n_1n_2-r_1r_2}>0
\end{eqnarray*}
when $r_1,n_2$ are positive constants and $n_1$ is sufficiently large.

From the ratio-type bounds in Theorems \ref{theta1} and \ref{thm1.2}, we have $\beta_1<\beta_2$ if and only if
\begin{eqnarray*}
\overline{d}<\frac{\delta((\lambda_{max}+\lambda_{min})\delta-2\lambda_{max}\lambda_{min})}{\delta^2-\lambda_{max}\lambda_{min}}=\frac{\delta((r_1+r_2)\delta+2n_1n_2-2r_1r_2)}{\delta^2+n_1n_2-r_1r_2},
\end{eqnarray*}
where $\overline{d}=\frac{n_1(r_1+n_2)+n_2(r_2+n_1)}{n_1+n_2}$ is the average degree of $G$. Notice that
\begin{eqnarray*}
&~&\lim_{n_1\rightarrow\infty}\overline{d}=r_1+2n_2,\\
&~&\lim_{n_1\rightarrow\infty}\frac{\delta((r_1+r_2)\delta+2n_1n_2-2r_1r_2)}{\delta^2+n_1n_2-r_1r_2}=2\delta=2r_1+2n_2.
\end{eqnarray*}
Hence $\beta_1<\beta_2$ when $r_1,n_2$ are positive constants and $n_1$ is sufficiently large.
\end{proof}

\end{CJK*}
\end{spacing}
\end{document}